\newtheorem{theorem}{Theorem}[section]
\newtheorem{definition}[theorem]{Definition}
\newtheorem{example}[theorem]{Example}
\newtheorem{lemma}[theorem]{Lemma}
\newtheorem{remark}[theorem]{Remark}
\begin{document}
\begin{center}
{\Large \bf{$C$-class functions on some fixed point results  of integral type 
and applications}}
\end{center} \vspace{10mm}
                   %<-------------------
\centerline{Arsalan Hojat Ansari$^{1}$, Bahman Moeini$^{2}$ and Seyed.M.A. Aleomraninejad$^{3,*}$}\ \\
\centerline{\footnotesize  $^{1}$Department of Mathematics, Karaj Branch, Islamic Azad University, Karaj, Iran}
\centerline{\footnotesize  $^{2}$Department of Mathematics, Hidaj Branch, Islamic Azad University, Hidaj, Iran}
\centerline{\footnotesize  $^{3}$Department of Mathematics, Qom University of Technology, Qom, Iran}
\vspace{1mm}
                   %<-------------------
\footnote{$^*$ Corresponding author}
\footnote{E-mail:mathanalsisamir4@gmail.com}
\footnote{E-mail:moeini145523@gmail.com}
\footnote{E-mail:aleomran63@yahoo.com}
\footnote{\textit{2010 Mathematics Subject Classification:} 47H10, 54H25.}
\footnote{\textit{Keywords:} Suzuki type mapping; fixed point; integral equation; integral type
mapping, $C$-class function.}
\footnote{\emph{}} \afterpage{} \fancyhead{} \fancyfoot{}
\fancyhead[LE, RO]{\bf\thepage} \fancyhead[LO]{\small $C$-class functions on 
some fixed point results  of integral type and applications} \fancyhead[RE]{\small
A.H. Ansari, B. Moeini and S.M.A. Aleomraninejad }
%%%%%%%%%%%%%%%%%%%
\begin{abstract}
In this paper, by using $C$-class functions \cite{16} for integral type of Suzuki-type mappings, some 
fixed point results are established on a metric space that generalize the results of Aleomraninejad and Shokouhnia 
[Adv. Fixed Point Theory, 5 (2015), No. 1, 101-109]. As an application, the existence of a continuous solution for an integral equation is obtained.
\end{abstract}
\section{\textbf{Introduction }}

The first important result on fixed points for contractive-type mappings was
the well-known Banach contraction theorem, published for the first time in
1922 (\cite{4}). In the general setting of complete metric spaces, this theorem
runs as follows.

\begin{theorem}
\label{1.1} Let $(X,d)$ be a complete metric space, $\beta \in (0,1)$ and
let $T:X\rightarrow X$ be a mapping such that for each $x,y\in X,$ 
\begin{equation*}
d(Tx,Ty)\leq \beta d(x,y).
\end{equation*}%
Then $T$ has a unique fixed point $a\in X$ such that for each $x\in X$, $%
\lim_{n\rightarrow \infty }T^{n}x=a$.
\end{theorem}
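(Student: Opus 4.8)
The plan is to use the classical Picard iteration. First I would fix an arbitrary point $x_0 \in X$ and define recursively $x_{n+1} = Tx_n$ for $n \geq 0$. Applying the contraction hypothesis repeatedly gives
\begin{equation*}
d(x_{n+1}, x_n) = d(Tx_n, Tx_{n-1}) \leq \beta\, d(x_n, x_{n-1}) \leq \cdots \leq \beta^{n} d(x_1, x_0).
\end{equation*}

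Next, for $m > n$ I would apply the triangle inequality together with this estimate to obtain
\begin{equation*}
d(x_m, x_n) \leq \sum_{k=n}^{m-1} d(x_{k+1}, x_k) \leq \Big( \sum_{k=n}^{m-1} \beta^{k} \Big) d(x_1, x_0) \leq \frac{\beta^{n}}{1-\beta}\, d(x_1, x_0).
\end{equation*}
Since $\beta \in (0,1)$, the right-hand side tends to $0$ as $n \to \infty$, so $(x_n)$ is Cauchy; by completeness of $(X,d)$ it converges to some $a \in X$.

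Then I would verify that $a$ is a fixed point. The contraction condition makes $T$ Lipschitz continuous, so passing to the limit in $x_{n+1} = Tx_n$ yields $a = Ta$; alternatively one estimates $d(Ta, a) \leq d(Ta, Tx_n) + d(x_{n+1}, a) \leq \beta\, d(a, x_n) + d(x_{n+1}, a) \to 0$. For uniqueness, if $a$ and $b$ are both fixed points then $d(a,b) = d(Ta, Tb) \leq \beta\, d(a,b)$, and since $\beta < 1$ this forces $d(a,b) = 0$, i.e.\ $a = b$. Finally, since $x_0$ was arbitrary, the same iteration started at any $x \in X$ produces a sequence converging to a fixed point, which by uniqueness is $a$; hence $\lim_{n \to \infty} T^{n} x = a$ for every $x \in X$.

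I do not expect any genuine obstacle here — the argument is entirely elementary. The only step deserving care is the passage from the geometric decay of consecutive distances to the Cauchy property, where the closed-form bound $\beta^{n}/(1-\beta)$ for the tail of the geometric series is what drives the argument and is precisely where the strict inequality $\beta < 1$ (rather than merely $\beta \leq 1$) enters.
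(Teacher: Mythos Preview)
Your argument is the standard Picard iteration proof and is entirely correct. Note, however, that the paper does not supply its own proof of this statement: Theorem~\ref{1.1} is quoted in the introduction as the classical Banach contraction principle (with a reference to Banach's 1922 paper) and serves only as background motivation for the later generalizations, so there is no in-paper proof to compare against.
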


In order to generalize this theorem, many authors have introduced various
types of contraction inequalities. In 2002, Branciari proved the following
result (see \cite{5}).

\begin{theorem}
\label{1.2} Let $(X,d)$ be a complete metric space, $\beta \in (0,1)$ and $%
T:X\longrightarrow X$ a mapping such that for each $x,y\in X$, 
\begin{equation*}
\int_{_{0}}^{d(Tx,Ty)}f(t)dt\leq \beta \int_{_{0}}^{d(x,y)}f(t)dt,
\end{equation*}%
where $f:[0,\infty )\rightarrow (0,\infty )$ is a Lebesgue integrable
mapping which is summable (i.e., with finite integral on each compact subset
of $[0,\infty )$) and for each $\varepsilon >0$, $\int_{_{0}}^{\varepsilon
}f(t)dt>0.$ Then $T$ has a unique fixed point $a\in X$ such that for each $%
x\in X$, $\lim_{n\rightarrow \infty }T^{n}x=a$.
\end{theorem}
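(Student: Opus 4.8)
The plan is to transport the hypothesis through the indefinite integral of $f$ and then run the classical Banach–Picard scheme. Define $\Phi:[0,\infty)\to[0,\infty)$ by $\Phi(s)=\int_0^s f(t)\,dt$. Since $f$ is locally Lebesgue integrable, $\Phi$ is well defined, nondecreasing, and (being the indefinite integral of a locally integrable function) continuous, with $\Phi(0)=0$; the assumption $\int_0^\varepsilon f(t)\,dt>0$ for every $\varepsilon>0$ gives $\Phi(s)>0$ for all $s>0$, and in particular forces $s_k\to 0$ whenever $\Phi(s_k)\to 0$ (otherwise some subsequence would stay above an $\varepsilon>0$, whence $\Phi(s_k)\ge\Phi(\varepsilon)>0$ along it). In this language the contractive hypothesis reads $\Phi(d(Tx,Ty))\le\beta\,\Phi(d(x,y))$ for all $x,y\in X$.

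First I would fix $x_0\in X$, put $x_n=T^nx_0$ and $d_n=d(x_n,x_{n+1})$, and note $\Phi(d_n)\le\beta\Phi(d_{n-1})\le\cdots\le\beta^n\Phi(d_0)\to 0$, so $d_n\to 0$ by the remark above. The main step, and the place I expect the real work, is showing $(x_n)$ is Cauchy. I would argue by contradiction: if not, there are $\varepsilon>0$ and indices $m_k>n_k\ge k$ with $d(x_{m_k},x_{n_k})\ge\varepsilon$, and choosing $m_k$ minimal with this property gives $d(x_{m_k-1},x_{n_k})<\varepsilon$. Two applications of the triangle inequality together with $d_n\to 0$ then yield both $d(x_{m_k},x_{n_k})\to\varepsilon$ and $d(x_{m_k+1},x_{n_k+1})\to\varepsilon$. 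Applying the contractive inequality to the pair $(x_{m_k},x_{n_k})$ gives $\Phi\big(d(x_{m_k+1},x_{n_k+1})\big)\le\beta\,\Phi\big(d(x_{m_k},x_{n_k})\big)$; letting $k\to\infty$ and using continuity of $\Phi$ produces $\Phi(\varepsilon)\le\beta\Phi(\varepsilon)$, which is absurd since $\Phi(\varepsilon)>0$ and $\beta<1$. Hence $(x_n)$ is Cauchy and, by completeness, converges to some $a\in X$.

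It then remains to identify $a$ as the unique fixed point and to record the global convergence. From $\Phi(d(x_{n+1},Ta))=\Phi(d(Tx_n,Ta))\le\beta\Phi(d(x_n,a))\to 0$ I would deduce $d(x_{n+1},Ta)\to 0$, so $x_n\to Ta$ and therefore $Ta=a$ by uniqueness of limits in a metric space. If $a,b$ were two distinct fixed points, then $\Phi(d(a,b))=\Phi(d(Ta,Tb))\le\beta\Phi(d(a,b))$ would force $\Phi(d(a,b))=0$, i.e. $d(a,b)=0$, a contradiction; hence the fixed point is unique. Finally, since $x_0$ was arbitrary, the Picard iterates $T^nx_0$ of any point converge to a fixed point, which by uniqueness must be $a$. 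The only delicate ingredients are the two topological facts about $\Phi$ (its continuity, and that $\Phi(s_k)\to 0$ implies $s_k\to 0$), both of which hinge on $f$ being locally integrable and having strictly positive integral over every interval $[0,\varepsilon]$; the rest is the standard contraction argument carried through $\Phi$.
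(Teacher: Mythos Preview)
Your proof is correct. The paper, however, does not prove Theorem~1.2 directly: it is quoted in the introduction as Branciari's 2002 result, and in the remarks of Section~4 it is recovered as a special case of the main Theorem~3.1 (take $F(s,t)=\beta s$, $\psi(t)=t$, and any $\varphi\in\Phi_u$; since the integral inequality in Theorem~1.2 holds for all $x,y$, the Suzuki-type implication is trivially satisfied for any $\alpha\in(0,\tfrac12]$). Thus the paper's route is to embed Branciari's theorem into the $C$-class/Suzuki framework and deduce it from the general machinery, whereas you give a direct, self-contained argument by pushing everything through the indefinite integral $\Phi(s)=\int_0^s f(t)\,dt$ and running the classical Banach--Picard scheme. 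Your approach is more elementary and transparent for this particular statement; the paper's approach buys generality---the single proof of Theorem~3.1 simultaneously yields Theorems~1.2, 1.3, 1.4 and the new $C$-class results---at the cost of the extra $\psi,\varphi,F$ bookkeeping and the Suzuki-type dichotomy argument, none of which is needed here.
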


In 2008, Suzuki introduced a new method in \cite{13} and then his method was
extended by some authors (see for example, \cite{7,8,9,13,14}). Kikkawa and
Suzuki extended the method in \cite{8} and then Mot and Petru\c{s}el further
generalized it in \cite{9}. The following theorem is the result of Theorem 2.2 in
\cite{2}.

\begin{theorem}
\label{1.3} Let $(X,d)$ be a complete metric space and $T:X\longrightarrow X$
a mapping. Suppose that there exist $\alpha \in (0,\frac{1}{2}]$, $\beta \in
(0,1)$ such that $\alpha d(x,Tx)\leq d(x,y)$ implies $d(Tx,Ty)\leq \beta
d(x,y)$ for all $x,y\in X$. Then $T$ has a unique fixed point $a\in X$ such
that for each $x\in X$, $\lim_{n\rightarrow \infty }T^{n}x=a$.
\end{theorem}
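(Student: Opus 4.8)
The plan is to run the Picard iteration and then transfer the fixed‑point property to its limit by a Suzuki‑type dichotomy. First I would fix an arbitrary $x_0 \in X$ and set $x_{n+1} = Tx_n$ for $n \in \N$. If $x_{n_0} = x_{n_0+1}$ for some $n_0$, then $x_{n_0}$ is a fixed point and the iteration stabilises, so I may assume $x_n \neq x_{n+1}$ for every $n$. Since $\alpha \leq \tfrac12 < 1$, one always has $\alpha\, d(x_n,Tx_n) \leq d(x_n,Tx_n) = d(x_n,x_{n+1})$, so the hypothesis applies to the pair $(x_n,x_{n+1})$ and gives $d(x_{n+1},x_{n+2}) = d(Tx_n,Tx_{n+1}) \leq \beta\, d(x_n,x_{n+1})$. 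Iterating yields $d(x_n,x_{n+1}) \leq \beta^n d(x_0,x_1)$, whence the usual triangle‑inequality and geometric‑series estimate shows $(x_n)$ is Cauchy; by completeness $x_n \to a$ for some $a \in X$.

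The crucial step — and the point I expect to be the main obstacle — is to show that $a$ is fixed, since the premise $\alpha d(x,Tx)\le d(x,y)$ cannot be applied blindly to $y=a$. Here I would establish the dichotomy that for every $n \in \N$ at least one of
\[
\alpha\, d(x_n,x_{n+1}) \leq d(x_n,a) \qquad\text{or}\qquad \alpha\, d(x_{n+1},x_{n+2}) \leq d(x_{n+1},a)
\]
holds. If both failed for some $n$, then, using $d(x_{n+1},x_{n+2}) \leq \beta\, d(x_n,x_{n+1})$,
\[
d(x_n,x_{n+1}) \leq d(x_n,a) + d(a,x_{n+1}) < \alpha\, d(x_n,x_{n+1}) + \alpha\, d(x_{n+1},x_{n+2}) \leq \alpha(1+\beta)\, d(x_n,x_{n+1}),
\]
and since $\alpha(1+\beta) \leq \tfrac{1+\beta}{2} < 1$ this contradicts $d(x_n,x_{n+1}) > 0$. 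Consequently one of the two inequalities holds for infinitely many $n$. In the first case the hypothesis applied to $(x_n,a)$ gives $d(x_{n+1},Ta) = d(Tx_n,Ta) \leq \beta\, d(x_n,a) \to 0$; in the second, applied to $(x_{n+1},a)$, it gives $d(x_{n+2},Ta) = d(Tx_{n+1},Ta) \leq \beta\, d(x_{n+1},a) \to 0$. Either way a subsequence of $(x_n)$ converges to $Ta$, and as $x_n \to a$ we get $Ta = a$.

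Finally, uniqueness and global convergence should follow routinely. If $a$ and $b$ are fixed points, then $\alpha\, d(a,Ta) = 0 \leq d(a,b)$, so the hypothesis gives $d(a,b) = d(Ta,Tb) \leq \beta\, d(a,b)$ and hence $a = b$. Since the construction above, started from an arbitrary $x_0$, produces a limit that is a fixed point of $T$ and therefore equals this unique fixed point, we conclude $\lim_{n\to\infty} T^n x = a$ for every $x \in X$.
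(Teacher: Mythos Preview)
Your argument is correct. The Picard iterate is contractive on consecutive terms, the geometric-series estimate gives Cauchyness, the dichotomy (at least one of $\alpha d(x_n,x_{n+1})\le d(x_n,a)$ or $\alpha d(x_{n+1},x_{n+2})\le d(x_{n+1},a)$ must hold) forces a subsequence of $(x_n)$ to converge to $Ta$, and uniqueness plus the arbitrariness of $x_0$ yield the global statement. Your contradiction in the dichotomy, using $\alpha(1+\beta)<1$, is in fact a touch sharper than what the paper uses (it bounds by $2\alpha\le 1$ via $d(x_{n+1},x_{n+2})\le d(x_n,x_{n+1})$).

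The paper, however, does not prove this theorem directly: it is quoted as a known result and then recovered as the special case $f\equiv 1$, $F(s,t)=\beta s$, $\psi(t)=At$, $\varphi(t)=Bt$ of the main Theorem~\ref{2.1}. That proof shares your dichotomy step verbatim, but the Cauchy part is handled differently: because the general $C$-class inequality does not yield geometric decay of $d(x_n,x_{n+1})$, the paper only gets monotonicity and $d(x_n,x_{n+1})\to 0$, and then invokes the standard ``not Cauchy $\Rightarrow$ subsequences with $d(x_{m(k)},x_{n(k)})\to\varepsilon>0$'' lemma to reach a contradiction. Your route is therefore more elementary and self-contained for this particular statement, while the paper's route passes through heavier machinery but simultaneously establishes the broader $C$-class result.
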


ّIn 2015, Aleomraninejad and Shokouhnia \cite{0} by idea of Suzuki and Branciari
established  the following theorem. 
\begin{theorem}
\label{1.4} Let $(X,d)$ be a complete metric space and $T:X\longrightarrow X$
a mapping. Suppose that there exist $\alpha \in (0,\frac{1}{2}]$, $\beta \in
(0,1)$ such that $%
\alpha d(x,Tx)\leq d(x,y)$ implies  
\begin{equation*}
\int_{_{0}}^{d(Tx,Ty)}f(t)dt\leq \beta \int_{_{0}}^{d(x,y)}f(t)dt,
\end{equation*}%
for all $x,y\in X$ and $f:[0,\infty )\rightarrow (0,\infty )$ is a Lebesgue integrable
mapping which is summable (i.e., with finite integral on each compact subset
of $[0,\infty )$) and for each $\varepsilon >0$, $\int_{_{0}}^{\varepsilon
}f(t)dt>0.$ Then $T$ has a unique fixed point $a\in X$ such that for each $%
x\in X$, $\lim_{n\rightarrow \infty }T^{n}x=a$.
\end{theorem}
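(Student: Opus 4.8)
\emph{Proof proposal.} The plan is to run the Picard iteration and treat separately the three standard stages: convergence of consecutive distances, the Cauchy property, and identification of the limit as the (unique) fixed point. Fix $x_{0}\in X$ and set $x_{n+1}=Tx_{n}$. If $x_{n}=x_{n+1}$ for some $n$, then $x_{n}$ is a fixed point and the iteration is eventually constant, so I may assume $a_{n}:=d(x_{n},x_{n+1})>0$ for all $n$. Write $F(t)=\int_{0}^{t}f(s)\,ds$; from the hypotheses on $f$ (summability and $\int_{0}^{\varepsilon}f>0$), the function $F$ is finite, continuous and nondecreasing on $[0,\infty)$ with $F(t)>0$ for every $t>0$. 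Since $\alpha\le\frac12<1$ we always have $\alpha d(x_{n},Tx_{n})\le d(x_{n},x_{n+1})$, so the assumed implication applies with $y=x_{n+1}$ and gives $F(a_{n+1})\le\beta F(a_{n})$; iterating, $F(a_{n})\le\beta^{n}F(a_{0})\to0$, and positivity plus monotonicity of $F$ force $a_{n}\to0$. The same inequality also yields $a_{n+1}<a_{n}$ for all $n$.

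Next I would prove $\{x_{n}\}$ is Cauchy by contradiction. If it is not, there are $\varepsilon>0$ and indices $n(k)<m(k)$ with $n(k)\to\infty$ and $m(k)$ chosen minimal subject to $d(x_{n(k)},x_{m(k)})\ge\varepsilon$. Minimality gives $d(x_{n(k)},x_{m(k)-1})<\varepsilon$, so $\varepsilon\le d(x_{n(k)},x_{m(k)})\le d(x_{n(k)},x_{m(k)-1})+a_{m(k)-1}<\varepsilon+a_{m(k)-1}$, whence $d(x_{n(k)},x_{m(k)})\to\varepsilon$; and $|d(x_{n(k)+1},x_{m(k)+1})-d(x_{n(k)},x_{m(k)})|\le a_{n(k)}+a_{m(k)}$ forces $d(x_{n(k)+1},x_{m(k)+1})\to\varepsilon$ too. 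Since $\alpha a_{n(k)}\to0<\varepsilon\le d(x_{n(k)},x_{m(k)})$, for large $k$ the hypothesis applies with $x=x_{n(k)}$, $y=x_{m(k)}$, giving $F\big(d(x_{n(k)+1},x_{m(k)+1})\big)\le\beta F\big(d(x_{n(k)},x_{m(k)})\big)$. Letting $k\to\infty$ and using continuity of $F$ gives $F(\varepsilon)\le\beta F(\varepsilon)$, i.e.\ $F(\varepsilon)\le0$, contradicting $F(\varepsilon)>0$. Hence $\{x_{n}\}$ is Cauchy and, by completeness, $x_{n}\to a$ for some $a\in X$.

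To identify $a$ as a fixed point I would use the Suzuki-type alternative: for each $n$, either $\alpha d(x_{n},Tx_{n})\le d(x_{n},a)$ or $\alpha d(Tx_{n},T^{2}x_{n})\le d(Tx_{n},a)$. Indeed, if both failed then, using $a_{n+1}<a_{n}$ and $2\alpha\le1$,
\[
d(x_{n},x_{n+1})\le d(x_{n},a)+d(a,x_{n+1})<\alpha d(x_{n},x_{n+1})+\alpha d(x_{n+1},x_{n+2})<2\alpha\, d(x_{n},x_{n+1})\le d(x_{n},x_{n+1}),
\]
which is impossible. So one of the two alternatives holds for infinitely many $n$. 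In the first case, the hypothesis applied with $x=x_{n}$, $y=a$ gives $F\big(d(x_{n+1},Ta)\big)\le\beta F\big(d(x_{n},a)\big)\to0$, hence $d(x_{n+1},Ta)\to0$ along that subsequence; in the second case, applying it with $x=x_{n+1}$, $y=a$ gives $F\big(d(x_{n+2},Ta)\big)\le\beta F\big(d(x_{n+1},a)\big)\to0$, hence $d(x_{n+2},Ta)\to0$. In either case a subsequence of $\{x_{n}\}$ converges to $Ta$, and since $x_{n}\to a$, uniqueness of limits yields $Ta=a$.

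For uniqueness, if $Ta=a$, $Tb=b$ and $a\neq b$, then $\alpha d(a,Ta)=0\le d(a,b)$, so the hypothesis gives $F\big(d(a,b)\big)=F\big(d(Ta,Tb)\big)\le\beta F\big(d(a,b)\big)$, forcing $F(d(a,b))\le0$, contradicting $d(a,b)>0$. The final assertion follows because the whole argument, run from an arbitrary starting point $x\in X$, shows $(T^{n}x)$ is Cauchy and converges to a fixed point of $T$, which by uniqueness must be $a$. I expect the Cauchy step to be the main obstacle: it requires the careful minimal choice of $m(k)$ and, crucially, passing to the limit inside the integrals, which rests on the continuity of $t\mapsto\int_{0}^{t}f$ (from summability of $f$) together with its strict positivity away from $0$; the constant $\alpha\le\frac12$ is then exactly what makes the fixed-point alternative work.
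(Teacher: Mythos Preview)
Your proof is correct and follows the same overall architecture as the paper (Picard iteration, Cauchy by contradiction via minimal-index subsequences, the Suzuki alternative to identify the limit, and uniqueness from $\alpha d(a,Ta)=0$). The paper actually obtains Theorem~\ref{1.4} as the special case $F(s,t)=\beta s$, $\psi(t)=At$, $\varphi(t)=Bt$ of its main Theorem~\ref{2.1}, so the relevant comparison is with that proof.

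There is one genuine difference worth noting. In the Cauchy step the paper invokes the two-term alternative \eqref{2} (either $\alpha d(x_{n},Tx_{n})\le d(x_{n},y)$ or $\alpha d(x_{n+1},Tx_{n+1})\le d(x_{n+1},y)$) with $y=x_{n(k)}$, which forces a two-case argument and an auxiliary $\delta_{\varepsilon}$ step. You bypass this entirely by observing that $\alpha a_{n(k)}\to 0<\varepsilon\le d(x_{n(k)},x_{m(k)})$, so for large $k$ the Suzuki premise holds directly with $x=x_{n(k)}$, $y=x_{m(k)}$; one application of the contractive inequality and one limit suffice. This is a clean simplification that also works verbatim in the $C$-class setting of Theorem~\ref{2.1}. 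Conversely, for the final claim $T^{n}x\to a$ the paper gives a slightly sharper direct estimate (apply the hypothesis with the fixed point $a$ in the first slot, using $\alpha d(a,Ta)=0$, to get $F(d(a,T^{n}x))\le\beta^{n}F(d(a,x))$), whereas you re-run the whole argument from $x$; both are valid, but the paper's route gives an explicit geometric rate.
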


The aim of this paper is to use of $C$-class functions and provide a new condition for  integral type mapping $T$ which guarantees the existence of its fixed point in a metric space by idea of Aleomraninejad and Shokouhnia. Our
results generalize some old results. In this way, we appeal the following
notions.
%%%%%%%%%%%%%%%%%%%%%%%%%%%%
\section{Basic notions}

\begin{lemma}
\label{1.6} Let $a,b\in \lbrack 0,\infty )$ and $f:[0,\infty )\rightarrow
(0,\infty )$ a Lebesgue integrable mapping which is summable and for each $%
\varepsilon >0$, $\int_{_{0}}^{\varepsilon }f(t)dt>0$. Then \newline
i) $a=0$ whenever $\int_{_{0}}^{a}f(t)dt=0$,\newline
ii) $a<b$ whenever $\int_{_{0}}^{a}f(t)dt<\int_{_{0}}^{b}f(t)dt$.
\end{lemma}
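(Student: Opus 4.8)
The plan is to pass to the auxiliary function $F:[0,\infty)\to[0,\infty)$ given by $F(x)=\int_0^x f(t)\,dt$. Summability of $f$ guarantees that $F(x)$ is a finite real number for every $x\ge 0$, and the hypotheses on $f$ translate into two elementary facts about $F$: first, $F$ is non-decreasing, since $f\ge 0$ and hence $F(y)-F(x)=\int_x^y f(t)\,dt\ge 0$ whenever $x\le y$; second, $F(\varepsilon)>0$ for every $\varepsilon>0$, which is exactly the standing assumption $\int_0^\varepsilon f(t)\,dt>0$. Both parts of the lemma then fall out by contradiction from these two observations together with monotonicity of the Lebesgue integral.

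For part (i), I would suppose that $\int_0^a f(t)\,dt=0$ and, aiming for a contradiction, that $a>0$. Then $a$ itself qualifies as an admissible $\varepsilon$ in the hypothesis on $f$, so $\int_0^a f(t)\,dt=\int_0^\varepsilon f(t)\,dt>0$, contradicting $\int_0^a f(t)\,dt=0$. Hence $a=0$. This step is essentially immediate once one correctly unpacks the assumption $\int_0^\varepsilon f>0$.

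For part (ii), I would again argue by contraposition: assume $\int_0^a f(t)\,dt<\int_0^b f(t)\,dt$ but $a\ge b$. Using additivity of the integral over the decomposition $[0,a]=[0,b]\cup[b,a]$, one has $\int_0^a f(t)\,dt=\int_0^b f(t)\,dt+\int_b^a f(t)\,dt$, and $\int_b^a f(t)\,dt\ge 0$ because $f\ge 0$ and $a\ge b$; hence $\int_0^a f(t)\,dt\ge\int_0^b f(t)\,dt$, contradicting the hypothesis. Therefore $a<b$. In the language of $F$: $F$ is non-decreasing, so $F(a)<F(b)$ forces $a<b$. I do not expect any genuine obstacle here; the only care needed is the measure-theoretic bookkeeping — that all the displayed integrals are finite (from summability) and that additivity and non-negativity of the integral apply (from $f$ being non-negative and Lebesgue integrable). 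I would also note in passing that the condition $\int_0^\varepsilon f>0$, combined with $f$ being strictly positive, in fact makes $F$ strictly increasing, which is the natural strengthening of the monotonicity used above, even though it is not required for the statements (i)--(ii) as given.
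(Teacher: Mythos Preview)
Your argument is correct; both parts follow exactly as you describe from non-negativity of $f$ (monotonicity of $x\mapsto\int_0^x f$) together with the hypothesis $\int_0^\varepsilon f>0$ for $\varepsilon>0$. The paper itself does not supply a proof of this lemma---it is recorded as an elementary auxiliary fact and used without justification---so there is nothing to compare against; your write-up is the standard verification one would expect.
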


\begin{lemma}
Let $L>0$, $\alpha (x),\beta (x)\in C([a,b])$ and $f:[0,\infty )\rightarrow
(0,\infty )$ a Lebesgue integrable mapping which is summable and for each $%
\varepsilon >0$, $\int_{_{0}}^{\varepsilon }f(t)dt>0$. Then $%
\int_{_{0}}^{\Vert \alpha \Vert _{\infty }}f(t)dt<L\int_{_{0}}^{\Vert \beta
\Vert _{\infty }}f(t)dt$ whenever $\int_{_{0}}^{\vert\alpha
(x)\vert}f(t)dt<L\int_{_{0}}^{\vert\beta (x)\vert}f(t)dt$.
\end{lemma}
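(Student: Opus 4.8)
The plan is to exploit two elementary facts: a continuous function on the compact interval $[a,b]$ attains its supremum, and the map $c\mapsto\int_{0}^{c}f(t)\,dt$ is nondecreasing because $f$ is nonnegative (indeed strictly positive). The hypothesis is understood to hold pointwise, i.e. $\int_{0}^{\vert\alpha(x)\vert}f(t)\,dt<L\int_{0}^{\vert\beta(x)\vert}f(t)\,dt$ for every $x\in[a,b]$, and we want the same inequality with $\Vert\alpha\Vert_{\infty}$ and $\Vert\beta\Vert_{\infty}$ in place of $\vert\alpha(x)\vert$ and $\vert\beta(x)\vert$.

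First I would note that since $\alpha\in C([a,b])$, the function $x\mapsto\vert\alpha(x)\vert$ is continuous on the compact set $[a,b]$, so by the extreme value theorem there exists a point $x_{0}\in[a,b]$ with $\vert\alpha(x_{0})\vert=\Vert\alpha\Vert_{\infty}$. Applying the standing hypothesis at this particular $x_{0}$ gives $\int_{0}^{\vert\alpha(x_{0})\vert}f(t)\,dt<L\int_{0}^{\vert\beta(x_{0})\vert}f(t)\,dt$, and by the choice of $x_{0}$ the left-hand side is exactly $\int_{0}^{\Vert\alpha\Vert_{\infty}}f(t)\,dt$.

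Next, since $\vert\beta(x_{0})\vert\le\Vert\beta\Vert_{\infty}$ and $f>0$, the monotonicity of $c\mapsto\int_{0}^{c}f(t)\,dt$ yields $\int_{0}^{\vert\beta(x_{0})\vert}f(t)\,dt\le\int_{0}^{\Vert\beta\Vert_{\infty}}f(t)\,dt$; multiplying this by $L>0$ and chaining it with the strict inequality from the previous step gives $\int_{0}^{\Vert\alpha\Vert_{\infty}}f(t)\,dt<L\int_{0}^{\Vert\beta\Vert_{\infty}}f(t)\,dt$, which is the desired conclusion.

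There is essentially no obstacle here: the one point worth stating carefully is why the strict inequality survives the passage to sup-norms, and the answer is that strictness is never weakened because it already holds at the maximizing point $x_{0}$, while on the $\beta$-side only a (weak) monotonicity bound is invoked. One could also remark that Lemma \ref{1.6}(ii) is not actually needed for this argument, only the monotonicity of the integral and compactness of $[a,b]$.
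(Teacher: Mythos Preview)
Your argument is correct: pick $x_{0}\in[a,b]$ where $|\alpha|$ attains its maximum, apply the pointwise hypothesis there, and then use monotonicity of $c\mapsto\int_{0}^{c}f(t)\,dt$ on the $\beta$-side. The strictness is preserved exactly as you explain.

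The paper itself states this lemma without proof, so there is nothing to compare against; your write-up supplies the missing justification. One small cosmetic point: your final remark that Lemma~\ref{1.6}(ii) is not needed is accurate, but you might instead phrase the monotonicity step as a direct consequence of $f>0$ (as you already do) and simply drop the aside.
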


in 2014 A.H. Ansari \cite{16} introduced the concept of $C$-class functions which
cover a large class of contractive conditions.

\begin{definition}
\label{C-class} \cite{16} A continuous function $F:[0,\infty )^{2}\rightarrow 
\mathbb{R}$ is called \textit{$C$-class } function if \ for any $s,t\in
\lbrack 0,\infty ),$ the following conditions hold:

(1) $F(s,t)\leq s$;

(2) $F(s,t)=s$ implies that either $s=0$ or $t=0$.
\end{definition}

An extra condition on $F$ that $F(0,0)=0$ could be imposed in some cases if
required. The letter $\mathcal{C}$ will denote the class of all $C$-
functions.

\begin{example}
\label{C-class examp}\cite{16} Following examples show that the class $\mathcal{C}
$ is nonempty:
\end{example}

\begin{enumerate}
\item $F(s,t)=s-t.$

\item $F(s,t)=ms,$for some $m\in (0,1).$

\item $f(s,t)=\frac{s}{(1+t)^{r}}$ for some $r\in (0,\infty ).$

\item $f(s,t)=\log (t+a^{s})/(1+t)$, for some $a>1.$

\item $f(s,t)=\ln (1+a^{s})/2$, for $e>a>1.$ Indeed $f(s,t)=s$ \ implies
that $s=0.$

\item $f(s,t)=(s+l)^{(1/(1+t)^{r})}-l$, $l>1,$ for $r\in (0,\infty )$.

\item $f(s,t)=s\log _{t+a}a$, for $a>1$.

\item $f(s,t)=s-(\frac{1+s}{2+s})(\frac{t}{1+t}).$

\item $f(s,t)=s\beta (s)$, where $\beta :[0,\infty )\rightarrow \lbrack
0,1). $and continuous

\item $f(s,t)=s-\frac{t}{k+t}.$

\item $f(s,t)=s-\varphi (s),$ where $\varphi :[0,\infty )\rightarrow \lbrack
0,\infty )$ is a continuous function such that $\varphi (t)=0$ if and only
if $t=0.$

\item $f(s,t)=sh(s,t),$ where $h:[0,\infty )\times \lbrack 0,\infty
)\rightarrow \lbrack 0,\infty )$ is a continuous function such that $%
h(t,s)<1 $ for all $t,s>0$.

\item $f(s,t)=s-(\frac{2+t}{1+t})t.$

\item $f(s,t)=\sqrt[n]{\ln (1+s^{n})}.$

\item $F(s,t)=\phi (s),$ where $\phi :[0,\infty )\rightarrow \lbrack
0,\infty )$ is a upper semicontinuous function such that $\phi (0)=0$ and $%
\phi (t)<t$ for $t>0.$

\item $F(s,t)=\frac{s}{(1+s)^{r}}$; $r\in (0,\infty )$.

\item $F(s,t)=\frac{s}{\Gamma (1/2)}\int_{0}^{\infty }\frac{e^{-x}}{\sqrt{x}%
+t}\,dx$, where $\Gamma $ is the Euler Gamma function.
\end{enumerate}

Let $\Phi _{u}$ denote the class of the functions $\varphi :[0,\infty
)\rightarrow \lbrack 0,\infty )$ which satisfy the following conditions:

\begin{enumerate}
\item[$(\varphi_{1})$] $\varphi $ continuous ;

\item[$(\varphi_{2})$] $\varphi (t)>0,t>0$ \ and $\varphi (0)\geq 0$\ .
\end{enumerate}

Let $\Psi $ be a set of all continuous functions $\psi :[0,\infty)
\rightarrow [0,\infty)$ satisfying the following conditions:

\begin{itemize}
\item[$(\protect\psi_{1})$] $\psi$ is continuous and strictly increasing.

\item[$(\protect\psi _{2})$] $\psi (t)=0$ if and only of $t=0$.
\end{itemize}

\begin{lemma}
\label{goodlem} \cite{17} Suppose $(X,d)$ is a metric space. Let $\{x_{n}\}$ be a
sequence in $X$such that $d(x_{n},x_{n+1})\rightarrow 0$ as $n\rightarrow
\infty $. If $\{x_{n}\}$ is not a Cauchy sequence then there exists an $%
\varepsilon >0$ and sequences of positive integers $\{m(k)\}$ and $\{n(k)\}$
with
\end{lemma}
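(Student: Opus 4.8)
The plan is to work directly from the negation of the Cauchy condition. Since $\{x_{n}\}$ is not a Cauchy sequence, there is an $\varepsilon>0$ with the property that for every $N$ one can find indices $p>q\geq N$ satisfying $d(x_{q},x_{p})\geq\varepsilon$; fix this $\varepsilon$. For each $k$ I would choose $n(k)\geq k$ for which such a partner exists, and then take $m(k)$ to be the \emph{least} integer with $m(k)>n(k)$ and $d(x_{n(k)},x_{m(k)})\geq\varepsilon$. With this choice $m(k)>n(k)\geq k$ and $d(x_{n(k)},x_{m(k)})\geq\varepsilon$, while the minimality of $m(k)$ forces $d(x_{n(k)},x_{m(k)-1})<\varepsilon$ (note $m(k)-1\geq n(k)$, and in the degenerate case $m(k)-1=n(k)$ this distance is simply $0$).

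Next I would combine these two inequalities with the triangle inequality to pin down the first limit:
\[
\varepsilon\leq d(x_{n(k)},x_{m(k)})\leq d(x_{n(k)},x_{m(k)-1})+d(x_{m(k)-1},x_{m(k)})<\varepsilon+d(x_{m(k)-1},x_{m(k)}).
\]
Because $d(x_{m(k)-1},x_{m(k)})\to 0$ as $k\to\infty$ by the hypothesis $d(x_{n},x_{n+1})\to 0$, the squeeze theorem yields $\lim_{k\to\infty}d(x_{n(k)},x_{m(k)})=\varepsilon$.

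The remaining index-shifted distances asserted in the statement --- quantities such as $d(x_{n(k)+1},x_{m(k)+1})$, $d(x_{n(k)},x_{m(k)+1})$ and $d(x_{n(k)-1},x_{m(k)})$ --- are then handled uniformly by the quadrilateral inequality $|d(a,b)-d(c,d)|\leq d(a,c)+d(b,d)$: each such distance differs from $d(x_{n(k)},x_{m(k)})$ by at most a fixed finite sum of terms of the form $d(x_{j},x_{j+1})$, every one of which vanishes as $k\to\infty$. Hence all of them converge to $\varepsilon$ as well.

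I expect no serious obstacle here; the construction is elementary. The only point that must be handled with care is the definition of $m(k)$ as the \emph{smallest} index exceeding $n(k)$ with $d(x_{n(k)},x_{m(k)})\geq\varepsilon$, since it is precisely this minimality that delivers the companion bound $d(x_{n(k)},x_{m(k)-1})<\varepsilon$ on which the squeeze in the second step rests. After that, everything reduces to repeated, routine applications of the triangle inequality together with $d(x_{n},x_{n+1})\to 0$.
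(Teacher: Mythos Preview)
Your argument is correct and is exactly the standard construction: pick $\varepsilon$ from the negation of the Cauchy condition, for each $k$ take $n(k)$ beyond $k$ and then let $m(k)$ be the \emph{least} index exceeding $n(k)$ with $d(x_{n(k)},x_{m(k)})\geq\varepsilon$, so that minimality yields $d(x_{n(k)},x_{m(k)-1})<\varepsilon$; the squeeze gives (ii) and (iii), and the remaining limits follow from the triangle inequality together with $d(x_{n},x_{n+1})\to 0$. One cosmetic point: the lemma asks for $n(k)>k$ strictly, so you should start the selection at $N=k+1$ rather than $N=k$.

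There is nothing in the paper to compare against: the lemma is quoted from \cite{17} (Babu and Sailaja) and no proof is supplied here. Your proof is the same one that appears in that reference and in the broader literature, so it is entirely in line with what the authors rely on.
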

$m(k)>n(k)>k$ such that $d(x_{m(k)},x_{n(k)})\geq \varepsilon $, $%
d(x_{m(k)-1},x_{n(k)})<\varepsilon $ and\newline
(i) $\lim_{k\rightarrow \infty }d(x_{m(k)-1},x_{n(k)+1})=\varepsilon $;\newline
(ii) $\lim_{k\rightarrow \infty }d(x_{m(k)},x_{n(k)})=\varepsilon$;\newline
(iii) $\lim_{k\rightarrow \infty }d(x_{m(k)-1},x_{n(k)})=\varepsilon$.\newline
\\
We note that also can see 
\[\lim_{k\rightarrow \infty
}d(x_{m(k)+1},x_{n(k)+1})=\varepsilon  \ \ \text{and}\ \ \lim_{k\rightarrow \infty
}d(x_{m(k)},x_{n(k)-1})=\varepsilon.
\]
%%%%%%%%%%%%%%%%%%%%%%%%%%%%%%%%%%%%
\section{Main Results}

The following theorem is the main result of this paper.

\begin{theorem}
\label{2.1} Let $(X,d)$ be a complete metric space and $T:X\longrightarrow X$
a mapping. Suppose that there exists $\alpha \in (0,\frac{1}{2}]$, such that $%
\alpha d(x,Tx)\leq d(x,y)$ implies 
\begin{align}
\psi (\int_{_{0}}^{d(Tx,Ty)}f(t)dt)\leq F(\psi
(\int_{_{0}}^{d(x,y)}f(t)dt),\varphi (\int_{_{0}}^{d(x,y)}f(t)dt)),
\label{1}
\end{align}
for all $x,y\in X$ ,\ $\psi \in \Psi ,\varphi \in \Phi _{u},F\in \mathcal{C}$ and 
$f:[0,\infty )\rightarrow (0,\infty )$ is a Lebesgue integrable mapping
which is summable and for each $\varepsilon >0$, $\int_{_{0}}^{\varepsilon
}f(t)dt>0.$ Then $T$ has a unique fixed point $a\in X$ such that for each $%
x\in X$, $\lim_{n\rightarrow \infty }T^{n}x=a$.
\end{theorem}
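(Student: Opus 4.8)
The plan is to follow the classical Suzuki-type scheme adapted to the integral/$C$-class setting. First I would fix an arbitrary $x_0\in X$ and define the Picard iterates $x_{n+1}=Tx_n$. Since $\alpha d(x_n,Tx_n)\le d(x_n,Tx_n)=d(x_n,x_{n+1})$ (using $\alpha\le 1/2<1$), the hypothesis applies with $x=x_n$, $y=x_{n+1}$, giving $\psi(\int_0^{d(x_{n+1},x_{n+2})}f)\le F(\psi(\int_0^{d(x_n,x_{n+1})}f),\varphi(\int_0^{d(x_n,x_{n+1})}f))\le \psi(\int_0^{d(x_n,x_{n+1})}f)$ by property (1) of $C$-class functions. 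Writing $s_n:=\int_0^{d(x_n,x_{n+1})}f(t)\,dt$ and using that $\psi$ is strictly increasing, this yields $s_{n+1}\le s_n$, so $\{s_n\}$ decreases to some limit $L\ge 0$. Passing to the limit in the $C$-class inequality and using continuity of $\psi,\varphi,F$, we get $\psi(L)\le F(\psi(L),\varphi(L))\le \psi(L)$, so $\psi(L)=F(\psi(L),\varphi(L))$; by property (2) either $\psi(L)=0$ or $\varphi(L)=0$, and in either case $(\psi_2)$ or $(\varphi_2)$ forces $L=0$. Then $\int_0^{d(x_n,x_{n+1})}f\to 0$, and Lemma~\ref{1.6}(i)-style reasoning (more precisely, if $d(x_n,x_{n+1})\not\to 0$ there is a subsequence bounded below by some $\delta>0$, whence $\int_0^{d(x_n,x_{n+1})}f\ge\int_0^\delta f>0$, a contradiction) gives $d(x_n,x_{n+1})\to 0$.

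Next I would show $\{x_n\}$ is Cauchy by contradiction: assuming not, invoke Lemma~\ref{goodlem} to obtain $\varepsilon>0$ and index sequences $m(k)>n(k)>k$ with $d(x_{m(k)},x_{n(k)})\ge\varepsilon$ and the listed limit relations. The point is to apply the contractive hypothesis with $x=x_{n(k)}$, $y=x_{m(k)}$; this requires checking $\alpha d(x_{n(k)},Tx_{n(k)})=\alpha d(x_{n(k)},x_{n(k)+1})\le d(x_{n(k)},x_{m(k)})$, which holds for $k$ large since the left side tends to $0$ while the right side is $\ge\varepsilon$. Then $\psi(\int_0^{d(x_{n(k)+1},x_{m(k)+1})}f)\le F(\psi(\int_0^{d(x_{n(k)},x_{m(k)})}f),\varphi(\int_0^{d(x_{n(k)},x_{m(k)})}f))$. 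Letting $k\to\infty$, using the limit relations from Lemma~\ref{goodlem} (both sides of the distance arguments tend to $\varepsilon$), continuity of $f$-integrals in the upper limit, and continuity of $\psi,\varphi,F$, we arrive at $\psi(\int_0^\varepsilon f)\le F(\psi(\int_0^\varepsilon f),\varphi(\int_0^\varepsilon f))\le\psi(\int_0^\varepsilon f)$, hence equality, hence (by the $C$-class property) $\psi(\int_0^\varepsilon f)=0$ or $\varphi(\int_0^\varepsilon f)=0$, forcing $\int_0^\varepsilon f=0$, contradicting $\int_0^\varepsilon f>0$. So $\{x_n\}$ is Cauchy and, by completeness, $x_n\to a$ for some $a\in X$.

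To see $a$ is a fixed point, the standard Suzuki device is needed: one shows that for each $n$, at least one of $\alpha d(x_n,Tx_n)\le d(x_n,a)$ or $\alpha d(x_{n+1},Tx_{n+1})\le d(x_{n+1},a)$ holds (otherwise both fail and the triangle inequality $d(x_n,x_{n+1})\le d(x_n,a)+d(a,x_{n+1})$ combined with $\alpha\le 1/2$ gives a contradiction with $d(x_n,x_{n+1})\ge$ the relevant quantities). Hence there is a subsequence along which the hypothesis applies with $y=a$, giving $\psi(\int_0^{d(Tx_{n_j},Ta)}f)\le F(\psi(\int_0^{d(x_{n_j},a)}f),\varphi(\dots))\le\psi(\int_0^{d(x_{n_j},a)}f)$; letting $j\to\infty$ with $d(x_{n_j},a)\to 0$ forces $\psi(\int_0^{d(Ta,a)}f)\le \psi(0)=0$, hence $\int_0^{d(a,Ta)}f=0$, so $d(a,Ta)=0$ by Lemma~\ref{1.6}(i), i.e. $Ta=a$. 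Finally, uniqueness: if $Ta=a$ and $Tb=b$ with $a\ne b$, then $\alpha d(a,Ta)=0\le d(a,b)$, so the hypothesis gives $\psi(\int_0^{d(a,b)}f)=\psi(\int_0^{d(Ta,Tb)}f)\le F(\psi(\int_0^{d(a,b)}f),\varphi(\int_0^{d(a,b)}f))\le\psi(\int_0^{d(a,b)}f)$, whence equality and $\psi(\int_0^{d(a,b)}f)=0$ or $\varphi(\int_0^{d(a,b)}f)=0$, either way $\int_0^{d(a,b)}f=0$, so $d(a,b)=0$, a contradiction. Convergence $T^n x\to a$ for every $x$ follows from applying the Picard argument to the starting point $x$ and the uniqueness just established.

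The main obstacle I anticipate is the fixed-point step: carefully justifying the Suzuki dichotomy ``for every $n$, $\alpha d(x_n,Tx_n)\le d(x_n,a)$ or $\alpha d(x_{n+1},Tx_{n+1})\le d(x_{n+1},a)$'' and extracting from it a genuine subsequence to which the contractive inequality legitimately applies; the Cauchy step is also delicate because one must verify the side condition $\alpha d(x_{n(k)},Tx_{n(k)})\le d(x_{n(k)},x_{m(k)})$ before invoking \eqref{1}, and one must be careful that the limit passages use the correct pairs of indices supplied by Lemma~\ref{goodlem} (in particular the auxiliary relations $d(x_{m(k)+1},x_{n(k)+1})\to\varepsilon$).
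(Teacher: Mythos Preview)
Your proposal is correct and follows essentially the same Suzuki-type scheme as the paper: Picard iteration, monotonicity of $\int_0^{d(x_n,x_{n+1})}f$, $d(x_n,x_{n+1})\to 0$, Cauchy via Lemma~\ref{goodlem}, the Suzuki dichotomy to reach the fixed point, and the standard uniqueness argument.

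There are two minor deviations worth noting. In the Cauchy step, the paper first proves the dichotomy (your ``for each $n$, $\alpha d(x_n,Tx_n)\le d(x_n,y)$ or $\alpha d(x_{n+1},Tx_{n+1})\le d(x_{n+1},y)$'') for \emph{all} $y$ and uses it there as well, obtaining two alternative contractive inequalities and then an additional $\delta_\varepsilon$ argument; your direct check $\alpha d(x_{n(k)},x_{n(k)+1})\to 0<\varepsilon\le d(x_{n(k)},x_{m(k)})$ is cleaner and avoids that detour. For the final claim $T^n x\to a$, the paper argues directly from $\alpha d(a,Ta)=0\le d(a,T^{n-1}x)$ and the contractive inequality, whereas you deduce it from rerunning the Picard argument from $x$ together with uniqueness; both are valid.
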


\begin{proof}
Fix arbitrary $x_{0}\in X$ and $x_{1}=Tx_{0}$. We have $\alpha
d(x_{0},Tx_{0})<d(x_{0},x_{1})$. Hence, by (\ref{1})
\begin{eqnarray*}
\psi (\int_{_{0}}^{d(Tx_{0},Tx_{1})}f(t)dt) &\leq &F(\psi
(\int_{_{0}}^{d(x_{0},x_{1})}f(t)dt),\varphi
(\int_{_{0}}^{d(x_{0},x_{1})}f(t)dt)) \\
&\leq &\psi (\int_{_{0}}^{d(x_{0},x_{1})}f(t)dt).
\end{eqnarray*}%
Since $\psi \in \Psi $, we have $\int_{_{0}}^{d(x_{1},Tx_{1})}f(t)dt<%
\int_{_{0}}^{d(x_{0},x_{1})}f(t)dt$. Let $x_{2}=Tx_{1}$. By Lemma \ref{1.6}, 
$d(x_{1},Tx_{1})<d(x_{0},x_{1})$, so $\alpha d(x_{1},Tx_{1})<d(x_{1},x_{2})$
and 
\begin{eqnarray*}
\psi (\int_{_{0}}^{d(Tx_{1},Tx_{2})}f(t)dt) &\leq &F(\psi
(\int_{_{0}}^{d(x_{1},x_{2})}f(t)dt),\varphi
(\int_{_{0}}^{d(x_{1},x_{2})}f(t)dt)) \\
&\leq &\psi (\int_{_{0}}^{d(x_{1},x_{2})}f(t)dt).
\end{eqnarray*}%
Now let $x_{3}=Tx_{2}$. By Lemma \ref{1.6}, $%
d(x_{2},x_{3})<d(x_{1},x_{2})<d(x_{0},x_{1})$. Since $\alpha
d(x_{2},Tx_{2})<d(x_{2},x_{3})$, 
\begin{eqnarray*}
\psi (\int_{_{0}}^{d(Tx_{2},Tx_{3})}f(t)dt) &\leq &F(\psi
(\int_{_{0}}^{d(x_{2},x_{3})}f(t)dt),\varphi
(\int_{_{0}}^{d(x_{2},x_{3})}f(t)dt)) \\
&\leq &\psi (\int_{_{0}}^{d(x_{2},x_{3})}f(t)dt).
\end{eqnarray*}%
By continuing this process, we obtain a sequence $\{x_{n}\}_{n\geq 1}$ in $X$
such that $x_{n+1}=Tx_{n},~d(x_{n},x_{n+1})<d(x_{n-1},x_{n})$ and 
\begin{equation*}
\psi (\int_{_{0}}^{d(x_{n},x_{n+1})}f(t)dt)\leq F(\psi
(\int_{_{0}}^{d(x_{n-1},x_{n})}f(t)dt),\varphi
(\int_{_{0}}^{d(x_{n-1},x_{n})}f(t)dt)).
\end{equation*}%
We claim that for any $y\in X$, one of the following relations is hold: 
\begin{align}
\label{2}
\alpha d(x_{n},Tx_{n})\leq d(x_{n},y)\  \ \text{or} \ \ \alpha d(x_{n+1},Tx_{n+1})\leq
d(x_{n+1},y).
\end{align}
Otherwise, if $\alpha d(x_{n},Tx_{n})>d(x_{n},y)$ and $\alpha
d(x_{n+1},Tx_{n+1})>d(x_{n+1},y)$, we have 
\begin{equation*}
d(x_{n},x_{n+1})\leq d(x_{n},y)+d(x_{n+1},y)<\alpha d(x_{n},Tx_{n})+\alpha
d(x_{n+1},Tx_{n+1})
\end{equation*}%
\begin{equation*}
=\alpha d(x_{n},x_{n+1})+\alpha d(x_{n+1},x_{n+2})\leq 2\alpha
d(x_{n},x_{n+1})\leq d(x_{n},x_{n+1}),
\end{equation*}%
which is a contradiction. Now let $a_{n}=d(x_{n},x_{n+1})$ for all $n\geq 1$%
. It is obvious that $\{a_{n}\}_{n\geq 1}$ is monotone non-increasing and so
there exists $a\geq 0$ such that $\lim_{n\rightarrow \infty }a_{n}=a$. Since 
\begin{eqnarray*}
\psi (\int_{_{0}}^{a}f(t)dt) &=&\psi (\lim_{n\rightarrow \infty
}\int_{_{0}}^{a_{n}}f(t)dt) \\
&\leq &F(\lim_{n\rightarrow \infty }\psi
(\int_{_{0}}^{d(x_{n-1},x_{n})}f(t)dt),\lim_{n\rightarrow \infty }\varphi
(\int_{_{0}}^{d(x_{n-1},x_{n})}f(t)dt)) \\
&=&F(\psi (\int_{_{0}}^{a}f(t)dt),\varphi (\int_{_{0}}^{a}f(t)dt))\\
&\leq& \psi (\int_{_{0}}^{a}f(t)dt).
\end{eqnarray*}%
So, $\psi (\int_{_{0}}^{a}f(t)dt)=0$ or $\varphi(\int_{_{0}}^{a}f(t)dt)=0$. Thus  $\int_{_{0}}^{a}f(t)dt=0$. Therefore, 
$a=0$, that is, $\lim_{n\rightarrow \infty }a_{n}=0$. We claim $%
\{x_{n}\}_{n\geq 1}$ is a Cauchy sequence in $(X,d)$ i.e, 
\begin{equation*}
\forall \varepsilon >0\ \ \ \exists N_{\varepsilon }\in \mathbb{N}\mid
\forall m,n\in \mathbb{N},m>n>N_{\varepsilon }~~~d(x_{m},x_{n})<\varepsilon .
\end{equation*}
Suppose, to the contrary, that $\{{x}_{n}\}$ is not a Cauchy sequence. By
Lemma \ref{goodlem}\ there exists $\varepsilon {\ >}0\ $ for which we can find
subsequences $\{{x}_{n(k)}\}$ and $\{{x}_{m(k)}\}$ of $\{{x}_{n}\}$ with ${n}%
(k){>m}(k){>}k$ such that
\begin{align}
\label{golm}
\epsilon &=\lim_{k\rightarrow \infty }d(x_{m(k)},x_{n(k)})=\lim_{k\rightarrow
\infty }d(x_{m(k)},x_{n(k)+1})=\lim_{k\rightarrow \infty
}d(x_{m(k)+1},x_{n(k)})\notag \\
&=\lim_{k\rightarrow \infty }d(x_{m(k)+1},x_{n(k)+1})
\end{align}
Now by relations (\ref{1}) and (\ref{2}), we have 
\begin{equation*}
\psi (\int_{_{0}}^{d(x_{m_{N_{k}}+1},x_{n_{N_{k}}+1})}f(t)dt)\leq F(\psi
(\int_{_{0}}^{d(x_{m_{N_{k}}},x_{n_{N_{k}}})}f(t)dt),\varphi
(\int_{_{0}}^{d(x_{m_{N_{k}}},x_{n_{N_{k}}})}f(t)dt))
\end{equation*}%
or 
\begin{equation*}
\psi (\int_{_{0}}^{d(x_{m_{N_{k}}+2},x_{n_{N_{k}}+1})}f(t)dt)\leq F(\psi
(\int_{_{0}}^{d(x_{m_{N_{k}+1}},x_{n_{N_{k}}})}f(t)dt),\varphi
(\int_{_{0}}^{d(x_{m_{N_{k}+1}},x_{n_{N_{k}}})}f(t)dt)).
\end{equation*}%
When $k\rightarrow \infty $, we have 
\begin{equation*}
\psi (\int_{_{0}}^{\varepsilon }f(t)dt)\leq F(\psi (\int_{_{0}}^{\varepsilon
}f(t)dt),\varphi (\int_{_{0}}^{\varepsilon }f(t)dt)).
\end{equation*}
So, $\psi (\int_{_{0}}^{\varepsilon }f(t)dt)=0$ or $\varphi(\int_{_{0}}^{\varepsilon }f(t)dt)=0$. Thus  $\int_{_{0}}^{\varepsilon
}f(t)dt=0$ and hence, $\varepsilon =0$, which is a contradiction. So
there exists $k\in \mathbb{N}$ such that for each natural number $N>k$ one
has $d(x_{m_{N}+1},x_{n_{N}+1})<\varepsilon $ and $%
d(x_{m_{N}+2},x_{n_{N}+1})<\varepsilon $. Now we claim that there exist a $%
\delta _{\varepsilon }\in (0,\varepsilon )$ and $N_{\varepsilon }\in \mathbb{%
N}$ such that for each natural number $N>N_{\varepsilon }$, we have 
\begin{equation*}
d(x_{m_{N}+1},x_{n_{N}+1})<\varepsilon -\delta _{\varepsilon }\ \ \text{or}\ \
d(x_{m_{N_{k}}+2},x_{n_{N_{k}}+1})<\varepsilon -\delta _{\varepsilon }.
\end{equation*}%
Suppose that exist a subsequence $\{N_{k}\}_{k\geq 1}\subseteq \mathbb{N}$
such that $d(x_{m_{N_{k}}+1},x_{n_{N_{k}}+1})\rightarrow \varepsilon $ and $%
d(x_{m_{N_{k}}+2},x_{n_{N_{k}}+1})\rightarrow \varepsilon $ as $k\rightarrow
\infty $. Now by relations (\ref{1}) and (\ref{2}), we have 
\begin{equation*}
\psi (\int_{_{0}}^{d(x_{m_{N_{k}}+1},x_{n_{N_{k}}+1})}f(t)dt)\leq F(\psi
(\int_{_{0}}^{d(x_{m_{N_{k}}},x_{n_{N_{k}}})}f(t)dt),\varphi
(\int_{_{0}}^{d(x_{m_{N_{k}}},x_{n_{N_{k}}})}f(t)dt))
\end{equation*}%
or 
\begin{equation*}
\psi (\int_{_{0}}^{d(x_{m_{N_{k}}+2},x_{n_{N_{k}}+1})}f(t)dt)\leq F(\psi
(\int_{_{0}}^{d(x_{m_{N_{k}+1}},x_{n_{N_{k}}})}f(t)dt),\varphi
(\int_{_{0}}^{d(x_{m_{N_{k}+1}},x_{n_{N_{k}}})}f(t)dt)),
\end{equation*}%
which is a contradiction. Now if $d(x_{m_{N}+1},x_{n_{N}+1})<\varepsilon
-\delta _{\varepsilon }$, then 
\begin{equation*}
\varepsilon \leq d(x_{m_{N}},x_{n_{N}})\leq d(x_{m_{N}},x_{m_{N}+1})
\end{equation*}%
\begin{equation*}
+d(x_{m_{N}+1},x_{n_{N}+1})+d(x_{n_{N}+1},x_{n_{N}})
\end{equation*}%
\begin{equation*}
<d(x_{m_{N}},x_{m_{N}+1})+(\varepsilon -\delta _{\varepsilon
})+d(x_{n_{N}},x_{n_{N}+1})
\end{equation*}%
and if $d(x_{m_{N}+2},x_{n_{N}+1})<\varepsilon -\delta _{\varepsilon }$,
then 
\begin{equation*}
\varepsilon \leq d(x_{m_{N}},x_{n_{N}})\leq
d(x_{m_{N}},x_{m_{N}+1})+d(x_{m_{N}+1},x_{m_{N}+2})
\end{equation*}%
\begin{equation*}
+d(x_{m_{N}+2},x_{n_{N}+1})+d(x_{n_{N}+1},x_{n_{N}})
\end{equation*}%
\begin{equation*}
<d(x_{m_{N}},x_{m_{N}+1})+d(x_{m_{N}+1},x_{m_{N}+2})+(\varepsilon -\delta
_{\varepsilon })+d(x_{n_{N}},x_{n_{N}+1}).
\end{equation*}%
So, we have $\varepsilon \leq \varepsilon -\delta _{\varepsilon }$ when $%
N\rightarrow \infty $, which is a contradiction. This proves our claim that $%
\{x_{n}\}_{n\geq 1}$ is a Cauchy sequence in $(X,d)$. Let $%
\lim_{n\rightarrow \infty }x_{n}=x$. By relations (\ref{1}) and (\ref{2}), for each $n\geq 1$
either\newline
\\
(i) 
\begin{align*}
\psi (\int_{_{0}}^{d(Tx_{n},Tx)}f(t)dt)\leq F(\psi
(\int_{_{0}}^{d(x_{n},x)}f(t)dt),\varphi
(\int_{_{0}}^{d(x_{n},x)}f(t)dt))\leq \psi (\int_{_{0}}^{d(x_{n},x)}f(t)dt)
\end{align*}
or\newline
\\
(ii) 
\begin{align*}
\psi (\int_{_{0}}^{d(Tx_{n+1},Tx)}f(t)dt)&\leq F(\psi
(\int_{_{0}}^{d(x_{n+1},x)}f(t)dt),\varphi
(\int_{_{0}}^{d(x_{n+1},x)}f(t)dt))\\
&\leq \psi
(\int_{_{0}}^{d(x_{n+1},x)}f(t)dt)
\end{align*}
holds and then $\int_{_{0}}^{d(Tx_{n},Tx)}f(t)dt\rightarrow 0$ or $%
\int_{_{0}}^{d(Tx_{n+1},Tx)}f(t)dt\rightarrow 0$, when $n\rightarrow \infty $%
. Thus $\lim_{n\rightarrow \infty }d(Tx_{n},Tx)=0$ or $\lim_{n\rightarrow
\infty }d(Tx_{n+1},Tx)=0$. In case (i), since 
\begin{equation*}
d(x,Tx)\leq d(x,Tx_{n})+d(Tx_{n},Tx)=d(x,x_{n+1})+d(Tx_{n},Tx),
\end{equation*}%
we obtain $d(x,Tx)=0$ and so $Tx=x$. We obtain $Tx=x$, similar to cace (i),
from case (ii). Now we shall show that this fixed point is unique. Suppose that
there are two distinct points $a,b\in X$ such that $Ta=a$ and $Tb=b$. Since $%
d(a,b)>0=\alpha d(a,Ta)$, we have the contradiction 
\begin{equation*}
0<\psi (\int_{0}^{d(a,b)}f(t)dt)=\psi (\int_{0}^{d(Ta,Tb)}f(t)dt)\leq F(\psi
(\int_{0}^{d(a,b)}f(t)dt),\varphi (\int_{0}^{d(a,b)}f(t)dt)).
\end{equation*}%
So, $\psi (\int_{0}^{d(a,b)}f(t)dt)=0$ or $\varphi
(\int_{0}^{d(a,b)}f(t)dt)=0$. Thus $\int_{0}^{d(a,b)}f(t)dt=0$. To prove
that $\lim_{n\rightarrow \infty }T^{n}x=a$, let $x$ be arbitrary and $a\in Fix(T)$. Note that since $d(a,T^{n-1}x)\geq 0=\alpha d(a,Ta)$ for every 
$n\in N$, we have 
\begin{eqnarray*}
\psi (\int_{0}^{d(a,T^{n}x)}f(t)dt) &\leq &F(\psi
(\int_{0}^{d(a,T^{n-1}x)}f(t)dt),\varphi (\int_{0}^{d(a,T^{n-1}x)}f(t)dt)). 
\end{eqnarray*}
Letting $n\rightarrow \infty$ in above inequality, we conclude that
\begin{align*}
\psi (\delta ) &\leq F(\psi (\delta ),\varphi (\delta ))\\
&\leq \psi (\delta ).
\end{align*}
So, $\psi (\delta )=0$ or $\varphi (\delta )=0$. Therefore, $\delta =0$,
 thus $\int_{0}^{d(a,T^{n}x)}f(t)dt\rightarrow 0$. Hence, $\lim_{n\rightarrow
\infty }T^{n}x=a$.
\end{proof}

\section{Example and Application}

In this section, we give some remarks and examples which clarify the
connection between our result and the classical ones. As an application, the
existence of a continuous solution for an integral equation is obtained.

\begin{remark}
If in Theorem \ref{2.1}, we take $F(s,t)=\beta s$, $\psi(t)=At$ and $\varphi(t)=Bt$, where $A,B\in 
(0,\infty)$, then Theorem \ref{1.4} is obtained. 
\end{remark}

\begin{remark}
Theorem \ref{2.1} is a generalization of Theorem \ref{1.3}. Letting $f(t)=1$
for each $t\geq 0$, $F(s,t)=\beta s$, $\psi(t)=At$ and $\varphi(t)=Bt$, where $A,B\in 
(0,\infty)$ in Theorem \ref{2.1}, we have 
\begin{align*}
\psi(\int_{0}^{d(Tx,Ty)}f(t)dt)&=Ad(Tx,Ty)\\
&\leq F(\psi(\int_{0}^{d(x,y)}f(t)dt),\varphi(\int_{0}^{d(x,y)}f(t)dt))\\
&=A\beta d(x,y),
\end{align*}
for all $x,y\in X$, i.e. $d(Tx,Ty)\leq \beta d(x,y)$. The converse is not true as we will see in Example %
\ref{e1}.
\end{remark}

\begin{remark}
Theorem \ref{2.1} is a generalization of Theorem \ref{1.2}. The convers is
not true as we will see the Example \ref{e1}.
\end{remark}

\begin{example}
\label{e1} Let $X:\{(0,0),(5,6),(5,4),(0,4)\}\cup \{(n,0):n\in \mathbb{N}%
\}\cup \{(n+12,n+13):n\in \mathbb{N}\}$ and its metric defined by $%
d((x_{1},x_{2}),(y_{1},y_{2}))=\vert x_{1}-y_{1}\vert+\vert x_{2}-y_{2}\vert$. Define 
$F:[0,\infty)^{2}\rightarrow \mathbb{R}$ by $F(s,t)=\frac{1}{2}s$, suppose
$\psi, \varphi : [0,\infty)\rightarrow [0,\infty)$ defined by $\psi(t)=2t$, $\varphi(t)=t$ and define
mapping $T$ on $X$ by 
\begin{equation*}
T((x_{1},x_{2}))=\left\{ 
\begin{array}{ll}
(x_{1},0)  & x_{1}\leqslant x_{2}, \\ 
&  \\ 
(0,x_{2})  & x_{2}<x_{1}. 
\end{array}%
\right.
\end{equation*}%
Then $T$ satisfies the assumptions of Theorem \ref{2.1} with $%
f(t)=t^{t}(1+\ln t)$ for $t>0$, $f(0)=0$, $\alpha =5/12$ $\beta =1/2$ while $%
T$ is not satisfies the assumptions of Theorem \ref{1.2}. First note that
\begin{align*}
\psi(\int_{0}^{d(Tx,Ty)}f(t)dt)\leq F (\psi(\int_{0}^{d(x,y)}f(t)dt),
\varphi(\int_{0}^{d(x,y)}f(t)dt)),
\end{align*}
i.e.,
$\int_{0}^{d(Tx,Ty)}f(t)dt\leq \frac{1}{2}\int_{0}^{d(x,y)}f(t)dt$ if $%
(x,y)\neq ((5,6),(5,4))$ and $(x,y)\neq ((5,4),(5,6)).$ In this context one
has $\int_{0}^{x}f(t)dt=x^{x}$. Let $d(Tx,Ty)=n$ and $d(x,y)=m$. It is clear
that $n<m$ if $(x,y)\neq ((5,6),(5,4))$ and $(x,y)\neq ((5,4),(5,6)).$ Then
we have 
\begin{align*}
\psi(\int_{0}^{d(Tx,Ty)}f(t)dt)&=2\int_{0}^{n}f(t)dt=2n^{n}<\frac{1}{2}(2m^{m})\\
&=\frac{1}{2}\psi(\int_{0}^{m}f(t)dt)\\
&=F(\psi(\int_{0}^{d(x,y)}f(t)dt),\varphi(\int_{0}^{d(x,y)}f(t)dt)),
\end{align*}%
because 
\begin{equation*}
\frac{n^{n}}{m^{m}}=\frac{n^{n}}{m^{n+k}}=(\frac{n}{m})^{n}\frac{1}{m^{k}}<%
\frac{1}{2}.
\end{equation*}%
On the other hands, since $\alpha d((5,6),T(5,4))>5/2>2$ and $\alpha
d((5,4),T(5,6))>25/12>2,$ $T$ satisfies the assumption in Theorem \ref{2.1}.
\end{example}

\begin{remark}
Let $x = (n+12,n+13)$ and $y = (n,0)$. Then in Exampel \ref{e1}, we have $\frac{d(Tx,Ty)}
{d(x,y)} =\frac{n+12}{n+25}$ and so $\sup_{x,y\in X\setminus \{(5,6),(5,4)\}}
\frac{d(Tx,Ty)}{d(x,y)} = 1$. Thus $T$ is not a contraction mapping.
\end{remark}

Let us consider the following integral equation: 
\begin{align}
x(t)=g(t)+\int_{0}^{t}K(s,x(s))ds,\ \ t\in [0,1],
\label{4.4}
\end{align}%
we are going to give existence and uniqueness results for the solution of
the integral equation using Theorem \ref{2.1}.\newline
Let us consider $%
X:=(C([0,1],\Vert .\Vert _{\infty })$.
\begin{theorem}
Consider the integral equation (\ref{4.4}). Suppose\newline
i) $K:[0,1]\times \mathbb{R}^{n}\rightarrow \mathbb{R}^{n}$
and $g:[0,1]\rightarrow \mathbb{R}^{n}$ are continuous;\newline
ii) there exist $\alpha \in (0,\frac{1}{2}]$, $\psi \in \Psi ,\ \varphi \in \Phi _{u},\ F\in \mathcal{C}$ 
such that \newline 
 $\alpha \vert x(t)-g(t)-\int_{0}^{t}K(s,x(s))ds\vert\leq \vert x(t)-y(t)\vert$ implies 
\begin{equation*}
\psi(\int_{_{0}}^{\vert K(t,x(t))-K(t,y(t))\vert}f(\lambda )d\lambda )\leq
F(\psi(\int_{_{0}}^{\vert x(t)-y(t)\vert}f(\lambda )d\lambda),\varphi(\int_{_{0}}^{\vert x(t)-y(t)\vert}f(\lambda )d\lambda)),
\end{equation*}
for all $x,y\in X$ and $f:[0,\infty )\rightarrow (0,\infty )$ is a Lebesgue
integrable mapping which is summable and for each $\varepsilon >0$, $%
\int_{_{0}}^{\varepsilon }f(\lambda )d\lambda >0.$ Then the integral
equation (2), have a unique solution.
\end{theorem}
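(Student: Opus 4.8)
The plan is to recast (\ref{4.4}) as a fixed point problem for an operator on $C([0,1],\mathbb{R}^{n})$ and then verify the hypotheses of Theorem \ref{2.1}. First I would equip $X=C([0,1],\mathbb{R}^{n})$ with the metric $d(x,y)=\Vert x-y\Vert_{\infty}$, which makes it a complete metric space, and define $T:X\to X$ by
\[
(Tx)(t)=g(t)+\int_{0}^{t}K(s,x(s))\,ds,\qquad t\in[0,1].
\]
Since $g$ and $K$ are continuous and $s\mapsto x(s)$ is continuous, the map $s\mapsto K(s,x(s))$ is continuous on $[0,1]$, so $Tx$ is continuous and $Tx\in X$; hence $T$ is well defined. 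A function $x\in X$ solves (\ref{4.4}) precisely when $Tx=x$, so it suffices to produce a unique fixed point of $T$; Theorem \ref{2.1} also then hands us $T^{n}x\to a$ for free.

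The core step is to check the contractive condition (\ref{1}) for $T$ with the data $\alpha,\psi,\varphi,F,f$ supplied by hypothesis (ii). Assume $\alpha\,d(x,Tx)\le d(x,y)$, i.e. $\alpha\Vert x-Tx\Vert_{\infty}\le\Vert x-y\Vert_{\infty}$. For every $t\in[0,1]$,
\[
\bigl|(Tx)(t)-(Ty)(t)\bigr|=\left|\int_{0}^{t}\bigl(K(s,x(s))-K(s,y(s))\bigr)\,ds\right|\le\int_{0}^{t}\bigl|K(s,x(s))-K(s,y(s))\bigr|\,ds.
\]
Whenever the pointwise premise $\alpha|x(t)-(Tx)(t)|\le|x(t)-y(t)|$ holds, hypothesis (ii), the inequality $F(s,r)\le s$, and the strict monotonicity of $\psi$ (together with Lemma \ref{1.6}) give $|K(t,x(t))-K(t,y(t))|\le|x(t)-y(t)|$ and, more sharply, the $C$-class estimate relating $\psi(\int_{0}^{|K(t,x(t))-K(t,y(t))|}f)$ to $F\bigl(\psi(\int_{0}^{|x(t)-y(t)|}f),\varphi(\int_{0}^{|x(t)-y(t)|}f)\bigr)$. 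Using an argument in the spirit of the second lemma of Section 2, which upgrades a pointwise integral inequality between continuous functions to the corresponding inequality for their sup-norms, one passes from these pointwise estimates to
\[
\psi\!\left(\int_{0}^{d(Tx,Ty)}f(\lambda)\,d\lambda\right)\le F\!\left(\psi\!\left(\int_{0}^{d(x,y)}f(\lambda)\,d\lambda\right),\varphi\!\left(\int_{0}^{d(x,y)}f(\lambda)\,d\lambda\right)\right),
\]
which is exactly (\ref{1}). Theorem \ref{2.1} then yields a unique $a\in X$ with $Ta=a$, and this $a$ is the unique solution of (\ref{4.4}).

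The main obstacle is the passage from the sup-norm premise $\alpha\Vert x-Tx\Vert_{\infty}\le\Vert x-y\Vert_{\infty}$ to the pointwise premise $\alpha|x(t)-(Tx)(t)|\le|x(t)-y(t)|$ that is actually needed to invoke (ii), and, relatedly, replacing the integral term $\int_{0}^{t}|K(s,x(s))-K(s,y(s))|\,ds$ by the pointwise quantities that the hypothesis controls. I would handle this by working throughout with the continuous functions $t\mapsto|(Tx)(t)-(Ty)(t)|$ and $t\mapsto|x(t)-y(t)|$ on the compact interval $[0,1]$, choosing points at which the relevant suprema are attained, and leaning on $F(s,r)\le s$ together with the monotonicity of $\psi$ so that the chain of inequalities closes; once these bookkeeping issues are resolved, the remaining steps are routine.
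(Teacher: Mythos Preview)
Your proposal follows essentially the same route as the paper: define $T$ on $(C([0,1],\mathbb{R}^{n}),\Vert\cdot\Vert_{\infty})$ by $(Tx)(t)=g(t)+\int_{0}^{t}K(s,x(s))\,ds$, bound $|Tx(t)-Ty(t)|$ by $\int_{0}^{t}|K(s,x(s))-K(s,y(s))|\,ds\le\max_{s}|K(s,x(s))-K(s,y(s))|$, feed this into hypothesis~(ii) at a maximising point, and invoke Theorem~\ref{2.1}. The ``main obstacle'' you flag---passing from the sup-norm premise $\alpha\Vert x-Tx\Vert_{\infty}\le\Vert x-y\Vert_{\infty}$ to the pointwise premise required by~(ii)---is handled no more carefully in the paper than in your sketch; the paper simply writes both premises side by side and proceeds with the chain of inequalities, so your outline is already at the paper's level of detail.
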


\begin{proof}
Let $T:X\rightarrow X$, $x\mapsto T(x)$, where 
\begin{equation*}
T(x)(t)=\int_{0}^{t}K(s,x(s)ds+g(t),\ \ \ t\in \lbrack 0,1].
\end{equation*}%
In this way, the integral equation (\ref{4.4}) can be written as $x=T(x)$. We are
going to show that $T$ satisfy the conditions of Theorem \ref{2.1}. Let $%
x,y\in X$ and $\alpha \vert x(t)-g(t)-\int_{0}^{t}K(s,x(s))ds\vert\leq \vert x(t)-y(t)\vert$.
Then 
\begin{equation*}
\alpha \Vert x-Tx\Vert _{\infty }\leq \Vert x-y\Vert _{\infty }
\end{equation*}%
implies 
\begin{align*}
\psi(\int_{0}^{\Vert Tx-Ty\Vert _{\infty }}&f(\lambda )d\lambda)\\
&=\psi(\int_{0}^{\max_{t\in \lbrack 0,1]}\vert Tx(t)-Ty(t)\vert}f(\lambda )d\lambda)\\
&\leq \psi(\int_{_{0}}^{\max_{t\in \lbrack
0,1]}\int_{0}^{t}\vert K(s,x(s))-K(s,y(s))\vert ds}f(\lambda )d\lambda)\\
&\leq \psi(\int_{_{0}}^{\max_{s\in \lbrack 0,1]}\vert K(s,x(s))-K(s,y(s))\vert}f(\lambda
)d\lambda)\\
&\leq F(\psi(\int_{_{0}}^{\max_{s\in \lbrack 0,1]}\vert x(s)-y(s)\vert}f(\lambda )d\lambda),\varphi(\int_{_{0}}^{\max_{s\in \lbrack 0,1]}\vert x(s)-y(s)\vert}f(\lambda )d\lambda))\\
&\leq F(\psi(\int_{_{0}}^{\Vert x-y\Vert _{\infty }}f(\lambda )d\lambda),
\varphi(\int_{_{0}}^{\Vert x-y\Vert _{\infty }}f(\lambda )d\lambda)).
\end{align*}%
Now Theorem \ref{2.1} shows that there exists $x_{0}\in X$ such that $%
Tx_{0}=x_{0}$ and so 
\begin{equation*}
x_{0}(t)=Tx_{0}(t)=\int_{0}^{t}K(s,x(s)ds+g(t).
\end{equation*}
\end{proof}

%%%%%%%%%%%%%%%%%%%%%%%

\end{document}